\newtheorem{lemma}{Lemma}[section]
\newtheorem{proposition}[lemma]{Proposition}
\newtheorem{remark}[lemma]{Remark}
\newtheorem{theorem}[lemma]{Theorem}
\newtheorem{definition}[lemma]{Definition}
\newtheorem{claim}[lemma]{Claim}
\newtheorem{corollary}[lemma]{Corollary}
\newtheorem{ques}[lemma]{Question}
\newtheorem*{remark*}{Remark}
\def\supp{\text{supp}}
\makeatletter \@addtoreset {equation}{section}
\renewcommand\theequation
\z@ \arabic{section}.\arabic{subsection}.\arabic{equation}
  \else \arabic{section}.\arabic{equation} \fi}
\begin{document}

\title[Near commutativity vs. almost commutativity in symplectic category]{The gap between near commutativity and almost commutativity in symplectic category}

\author[Lev Buhovsky]{Lev Buhovsky$^{1}$}
\footnotetext[1]{The author also uses the spelling ``Buhovski"
for his family name.}
\footnotetext[2]{First published in Electronic Research Announcements in Mathematical Sciences in Volume 20 (2013), pp 71-76, published by American Institute of Mathematical Sciences (AIMS). \\ $ \copyright $ 2013 American Institute of Mathematical Sciences.}
\address{Lev Buhovski \\ School of Mathematical Sciences, Tel Aviv University}
\email{levbuh@post.tau.ac.il}
\thanks{I would like to thank Leonid Polterovich for constant encouragement, interest in this work, useful comments, and for finding a mistake in the first draft of the paper. This work was partially supported by the Israel Science Foundation grant 1380/13.}
\subjclass[2010]{Primary: 53D99}
\keywords{Poisson bracket, rigidity, Poisson bracket invariants, pseudo-holomorphic curves}
\date{\today}

\begin{abstract}
On any closed symplectic manifold of dimension greater than $ 2 $, we construct a pair of smooth functions, such that on the one hand, the uniform norm of their Poisson bracket equals to $ 1 $, but on the other hand, this pair cannot be reasonably approximated (in the uniform norm) by a pair of Poisson commuting smooth functions. This comes in contrast with the dimension $ 2 $ case, where by a partial case of a result of Zapolsky~\cite{Z-2}, an opposite statement holds. 
\end{abstract}

\maketitle


\section{Introduction and results}

During the last ten years, function theory on symplectic manifolds has attracted a great deal of attention~\cite{B,BEP,CV,EP-qst,EP-Poisson,EP-Poisson2,EPR,EPZ,P,Z-1,Z-2}. The $ C^0 $-rigidity of the Poisson bracket~\cite{EP-Poisson} (cf.~\cite{B}) is one of the achievements of this theory, and it states that on a closed symplectic manifold $ (M,\omega) $, the uniform norm of the Poisson bracket of a pair of smooth functions on $ M $ is a lower semi-continuous functional, when we consider the uniform (or the $ C^0 $) topology on $ C^{\infty}(M) \times C^{\infty}(M) $. Informally speaking, this means that one cannot significantly reduce the $ C^0 $ norm of the Poisson bracket of two smooth functions by an arbitrarily small $ C^0 $ perturbation. The $ C^0 $-rigidity of the Poisson bracket holds also when the symplectic manifold $ (M,\omega) $ is open, if we restrict to smooth compactly supported functions. In this context it is natural to ask, how strongly one should perturb a given pair of smooth functions in order to significantly reduce the $ C^0 $ norm of their Poisson bracket. The following question was asked privately by Polterovich in 2009 (later it also appeared in~\cite{Z-2}):

\begin{ques} \label{Q:Polterovich}
Let $ (M,\omega) $ be a closed symplectic manifold. Does there exist a constant $ C > 0 $, such that for any pair of smooth functions $ f,g : M \rightarrow \mathbb{R} $ satisfying $ \| \{ f,g \} \| = 1 $, there exists a pair of smooth functions $ F,G : M \rightarrow \mathbb{R} $, such that
$ \| F - f \|, \| G - g \| \leqslant C $ and such that $ \{ F,G \} = 0 $ on $ M $?
\end{ques}

The $ 2 $-dimensional case of this question was answered affirmatively by Zapolsky, 
and in fact, it appeared as a particular case of a more general statement which applies to functions on a manifold with a volume form~\cite{Z-2}.

The main result of this note is

\begin{theorem} \label{T:solution-question-P}
Let $ (M,\omega) $ be a symplectic manifold of dimension $ 2n > 2 $. Then for any $ 0 < p < q $ there exists a pair of smooth compactly supported functions $ f,g : M \rightarrow \mathbb{R} $, such that $ \| f \| = \| g \| = 1 $, $ \| \{ f,g \} \| = q $, and such that for any $ s \in [0,q] $ we have 

\begin{equation} \label{E:1}
\frac{1}{2} - \frac{1}{2p} s \leqslant \rho_{f,g}(s) \leqslant \frac{1}{2} - \frac{1}{2q} s
\end{equation}
\end{theorem}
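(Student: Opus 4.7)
The upper bound is essentially automatic: given any $(f,g)$ with $\|f\|=\|g\|=1$ and $\|\{f,g\}\|=q$, set $F:=(s/q)f$, $G:=g$. Then $\|\{F,G\}\|=s$, $\|F-f\|=1-s/q$, $\|G-g\|=0$, yielding the average perturbation $1/2 - s/(2q)$. So the upper bound holds for \emph{any} pair meeting the norm/bracket constraints; the content is the construction and the matching lower bound.

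\textbf{Construction.} In a Darboux chart $(x_1,y_1,\dots,x_n,y_n)$ of $M$ (available since $2n > 2$), let $\chi\geq 0$ be a smooth bump in the transverse coordinates $(x_2,y_2,\dots,x_n,y_n)$ equal to $1$ on a sub-box $R_2$ and compactly supported. Let $u,v:\mathbb R\to[-1,1]$ be smooth compactly supported functions with $u(x)=x$ on $[-1,1]$ and $v(y)=py$ on $[-1/p,1/p]$, both capped at $\pm 1$ outside. Set
\[
f(x,y) := u(x_1)\,\chi(x_2,y_2,\dots,x_n,y_n), \qquad g(x,y) := v(y_1)\,\chi(x_2,y_2,\dots,x_n,y_n).
\]
Because $f$ is independent of $y_1$ and $g$ of $x_1$, and since $\{\chi,\chi\}=0$, a direct calculation gives $\{f,g\} = u'(x_1)v'(y_1)\chi^2$. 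In the linear regime this equals $p$; to boost $\|\{f,g\}\|$ to $q$ I insert a localized ``wiggle'' in $u$ (amplitude $\leq 1$, in an interval disjoint from the linear one) where $u'$ reaches $q/p$, preserving $\|u\|=1$ and producing $\{f,g\}=q$ on the wiggle region.

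\textbf{Lower bound.} The key claim is $\|\{F,G\}\|\geq p(1-\|F-f\|-\|G-g\|)$ whenever the right side is nonnegative. Write $r_1:=\|F-f\|$, $r_2:=\|G-g\|$. The strategy is a slice-wise area-of-image argument: for each $\tau\in R_2$ the unperturbed pair restricted to the $2$-dimensional slice $\Sigma_\tau := \mathbb R^2_{x_1,y_1}\times\{\tau\}$ is $(u(x_1),v(y_1))$, mapping the linear rectangle $L_0 := [-1,1]\times[-1/p,1/p]$ bijectively onto the square $[-1,1]^2$. A topological degree argument forces
\[
(F,G)(L_0) \;\supseteq\; [-(1-r_1),1-r_1]\times[-(1-r_2),1-r_2],
\]
and hence the slice-wise Jacobian bound $\int_{L_0}|F_{x_1}G_{y_1}-F_{y_1}G_{x_1}|\,dx_1\,dy_1 \geq 4(1-r_1)(1-r_2)$. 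Integrating over $\tau\in R_2$ and handling the transverse cross-terms (see below) leads to
\[
s\cdot\mathrm{Vol}(R) \;\geq\; \int_R |\{F,G\}|\,\omega^n/n! \;\geq\; 4(1-r_1)(1-r_2)\,\mathrm{Vol}(R_2).
\]
Since the construction gives $\mathrm{Vol}(R)=(4/p)\mathrm{Vol}(R_2)$, rearranging yields $s\geq p(1-r_1)(1-r_2) \geq p(1-r_1-r_2)$, which is exactly the required lower bound.

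\textbf{Main obstacle.} The delicate step is passing from the $2$D slice Jacobian bound to an inequality for the full $2n$-dimensional bracket: the transverse cross-terms $\sum_{j\geq 2}(F_{x_j}G_{y_j}-F_{y_j}G_{x_j})$ are not controlled pointwise by $\|\{F,G\}\|_{C^0}$, since only $C^0$-closeness (not $C^1$) of $F,G$ to $f,g$ is assumed. The design of the construction is what makes progress possible: $f$ depends only on $x_1$ and $g$ only on $y_1$ throughout $R$, so the cross-term in $\{F,G\}$ reduces on $R$ to the bilinear expression $\{\delta F,\delta G\}_{(>1)}$ in the perturbations alone, whose integral over $R$ can be controlled via Stokes' theorem with negligible boundary contribution after a suitable choice of cutoffs. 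An alternative route (closer in spirit to the keyword ``pseudo-holomorphic curves'' in the paper) is to invoke a $pb_4$-type Floer-theoretic invariant for the linked configuration of sublevel sets $\{f\gtrless\pm 1/2\}$ and $\{g\gtrless\pm 1/2\}$: the crucial symplectic computation is that this invariant equals exactly $p$ in the present setup, and a continuous-family version (integrated over a $1$-parameter family of level sets) upgrades the resulting bilinear bound $p(1-2r_1)(1-2r_2)$ to the linear $p(1-r_1-r_2)$ needed for \eqref{E:1}.
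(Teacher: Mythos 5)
Your proposal breaks down at both ends, and the two problems are related. First, the upper bound and the construction. The profile function is the \emph{sum} $\|F-f\|+\|G-g\|$, not the average, so your choice $F=(s/q)f$, $G=g$ only gives $\rho_{f,g}(s)\leqslant 1-s/q$, which is twice what \eqref{E:1} demands; the upper bound is in fact \emph{not} automatic for every pair with $\|f\|=\|g\|=1$, $\|\{f,g\}\|=q$. Worse, your particular pair cannot satisfy \eqref{E:1} at all: since \eqref{E:1} forces $\rho_{f,g}(0)=1/2$ exactly, while your $f,g$ take both values $\pm1$ on the opposite sides of the rectangle, the same linking/$pb_4$-type reasoning you invoke for the lower bound shows that killing the bracket costs $r_1\geqslant1$ or $r_2\geqslant1$, i.e. $\rho_{f,g}(0)=1$. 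This is why the paper normalizes $0\leqslant f,g\leqslant1$ with $f=0$ on $a_1$, $f=1$ on $a_3$ (Claim~\ref{Cl:1}) and obtains the upper bound by the shift trick $F=(\tfrac12-\tfrac{s}{2q}+\tfrac{s}{q}f)h$, $G=g$, which exploits $f\geqslant0$ to halve the cost. Two further points in your construction need repair: a region of $(x_1,y_1)$-area $4/p$ (arbitrarily large) times a transverse box need not sit inside a Darboux chart of a general (e.g.\ closed) $M$ in the flat coordinates --- the paper gets a large-area two-dimensional quadrilateral inside a small chart via the wiggly symplectic curve $z\mapsto(z^k,z,0,\dots,0)$ of Lemma~\ref{L:curve-existence} plus the symplectic neighborhood theorem; and your $f=u(x_1)\chi$, $g=v(y_1)\chi$ are not compactly supported in the missing coordinate, so additional cutoffs as in Claim~\ref{Cl:1} are needed.

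Second, and most seriously, the lower bound. The slice-wise degree argument bounds only $\int|F_{x_1}G_{y_1}-F_{y_1}G_{x_1}|$, and you have no way to pass from this to $\|\{F,G\}\|$: the transverse part of the bracket is not small pointwise or in $L^1$, and the Stokes fix you sketch fails because the boundary contribution $\int_{\partial R}\delta F\,d(\delta G)\wedge\omega^{n-1}$ involves first derivatives of the perturbations, over which the hypothesis ($C^0$-closeness only) gives no control; if such an elementary argument worked, it would also give an elementary proof of $C^0$-rigidity of the Poisson bracket, which is known to require hard machinery. This is precisely why the paper takes the holomorphic-curve route: the functions are multiplied by bumps equal to $1$ on circles $S^1_\rho$ inside small annulus factors, so the relevant quadruple is $a_i\times(S^1_\rho)^{n-1}$; given a competitor $(F,G)$, one composes with $1$-Lipschitz functions vanishing on $[-\alpha,\alpha]$ and $[-\beta,\beta]$, so that $F'=u\circ F$, $G'=v\circ G$ satisfy exact boundary/vanishing conditions, have support in the product region, and obey $\|\{F',G'\}\|\leqslant\|\{F,G\}\|$; then one transplants into $\mathbb{R}^2\times T^*\mathbb{T}^{n-1}$ (circles to the zero section) and applies the computation $pb_4(a_1\times L,a_3\times L,a_2\times L,a_4\times L)=1/A$ (Lemma~\ref{L:pb4}, proved as in~\cite{BEP} via pseudo-holomorphic curves). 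Your ``alternative route'' names this invariant but omits all of these devices, which are the actual content; also your worry about upgrading $p(1-2r_1)(1-2r_2)$ to $p(1-r_1-r_2)$ is unnecessary: the required estimate is $s\geqslant p\bigl(1-2(r_1+r_2)\bigr)$, and $(1-2r_1)(1-2r_2)\geqslant 1-2(r_1+r_2)$ already gives it, with no continuous-family argument.
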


\noindent Here $ \rho_{f,g}(s) $ is the {\em profile function}~\cite{BEP}, which is defined as
$$ \rho_{f,g}(s) = \inf \{ \| F-f \| + \| G-g \| \,\, | \,\, F,G \in C^{\infty}_c(M) \,\, \text{and} \,\, \| \{ F,G\} \| \leqslant s \} .$$

Now let $ (M,\omega) $ be a symplectic manifold of dimension $ 2n > 2 $. Note that for a pair $ f,g : M \rightarrow \mathbb{R} $ of smooth compactly supported functions, the value $ \rho_{f,g}(0) $ is precisely the uniform distance from the pair $ f,g $ of functions to the set of pairs $ F,G : M \rightarrow \mathbb{R} $ of smooth compactly supported Poisson commuting functions. Hence, if we take any $ C > 0 $, and set $ q = 1/(64C^2) $, then Theorem~\ref{T:solution-question-P} implies the existence of a pair $ f,g : M \rightarrow \mathbb{R} $ of smooth compactly supported functions, such that $ \| \{ f,g \} \| = q = 1/(64C^2) $ and $ \rho_{f,g} (0) = 1/2 $, and hence for the functions $ f_1 = 8C f $, $ g_1 = 8C g $ we first of all get $ \| \{ f_1 , g_1 \} \| = 1 $, and moreover we get $ \rho_{f_1,g_1}(0) = 4C $, which implies that there does not exist a  pair $ F,G : M \rightarrow \mathbb{R} $ of smooth compactly supported Poisson commuting functions, such that $ \| F-f_1 \| , \| G-g_1 \| \leqslant C $. Thus, we obtain 

\begin{corollary}
The answer to Question~\ref{Q:Polterovich} is negative in dimension $ > 2 $.
\end{corollary}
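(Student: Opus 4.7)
The plan is to exhibit $(f,g)$ as an explicit local model inside a Darboux chart of $M$, to deduce the upper bound by a one-parameter deformation, and to prove the lower bound via a Poisson-bracket rigidity argument exploiting the product structure of $g$.

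\textbf{Construction and upper bound.} Since $2n \ge 4$, fix a Darboux chart with local coordinates $(x_1,y_1,x_2,y_2,\dots,x_n,y_n)$, and within a small compactly supported region set
\[
f = \alpha(x_1), \qquad g = \phi(y_1)\,\psi(x_2,y_2),
\]
where $\alpha:\mathbb{R}\to[-1,1]$ and $\phi:\mathbb{R}\to[0,1]$ are monotone smoothed step functions attaining both endpoints of their target intervals, and $\psi:\mathbb{R}^2\to[-1,1]$ is a compactly supported bump attaining $\pm 1$. Multiplication by suitable cutoffs makes $f,g\in C^\infty_c(M)$ with $\|f\|=\|g\|=1$. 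Since $\{f,\psi\}=0$, the bracket reads
\[
\{f,g\} = \alpha'(x_1)\,\phi'(y_1)\,\psi(x_2,y_2),
\]
whose uniform norm can be tuned to equal $q$ by adjusting the widths of $\alpha$ and $\phi$. For the upper bound, keep $F = f$ and set $G_\lambda := (1-\lambda)\,g + \tfrac{\lambda}{2}\,\psi$. Using $\{f,\psi\}=0$ once more, $\|\{F,G_\lambda\}\| = (1-\lambda)q$, while $\|G_\lambda - g\| = \lambda\,\|(\tfrac{1}{2}-\phi)\psi\| \le \lambda/2$. Choosing $\lambda = 1 - s/q$ yields $\rho_{f,g}(s) \le \tfrac{1}{2}-\tfrac{s}{2q}$.

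\textbf{Lower bound.} The mechanism is cleanest when $F = f$: the Hamiltonian orbits of $f$ are straight lines in the $y_1$-direction with speed $|\alpha'(x_1)|$, along which $g$ varies from $0$ (where $\phi = 0$) to $\psi(x_2,y_2)$ (where $\phi = 1$). If $\|\{f,G\}\| \le s$, then $G$ can change by at most $s\cdot\tau$ along such an orbit, where $\tau$ is the duration needed to traverse between the two $\phi$-levels. Picking $(x_2,y_2)$ with $|\psi|=1$ and applying the triangle inequality at the two endpoints yields $\|G-g\| \ge \tfrac{1}{2}(1 - s\tau)$, which for the fastest orbit $\tau \approx 1/q$ gives $\tfrac{1}{2} - \tfrac{s}{2q}$, stronger than the claimed bound $\tfrac{1}{2} - \tfrac{s}{2p}$ for any $p < q$. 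For general $F$ with $\|F-f\|$ small, the same mechanism must operate, but it has to be expressed via a symplectic-invariant formulation, since $X_F$ is not controlled pointwise by $\|F-f\|_{C^0}$.

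\textbf{Main obstacle.} The delicate step is extracting the trajectory-type estimate when $F$ is only $C^0$-close (not $C^1$-close) to $f$: the vector field $X_F$ need not resemble $X_f = -\alpha'(x_1)\,\partial_{y_1}$ in any direct pointwise sense, so one cannot literally flow along it. The correct argument replaces the trajectory picture by a Poisson-bracket invariant of $pb_4$-type in the spirit of~\cite{BEP}, applied to the quadruple of compact level sets carved out by $f$ and $g$; controlling this invariant is where $p < q$ enters as slack absorbing the gap between $C^0$-closeness and dynamical closeness. The hypothesis $2n > 2$ is essential here: in dimension $2$ the relevant invariants vanish by~\cite{Z-2}, matching the affirmative answer in that case.
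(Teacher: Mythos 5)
The decisive gap is the lower bound, which is the entire content of the result. Your trajectory estimate only treats the case $F=f$ exactly, and you yourself concede that for $F$ merely $C^0$-close to $f$ the argument ``has to be expressed via a Poisson-bracket invariant of $pb_4$-type in the spirit of~\cite{BEP}'' --- but that is precisely the part that must be supplied, and it is not: you never specify the quadruple of compact sets, never explain how the hypothesis $\|F-f\|+\|G-g\|\leqslant t$ is converted into the boundary conditions entering Definition~\ref{D:pb_4} (the paper does this by composing $F,G$ with functions $u,v$ vanishing near $0$, so that $F'=u\circ F$, $G'=v\circ G$ vanish exactly where $f,g$ vanish and are $\geqslant 1-(2+\delta)\alpha$, resp.\ $\geqslant 1-(2+\delta)\beta$, where $f,g$ equal $1$, without increasing the bracket), and never compute or bound the invariant. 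Naming the tool is not controlling it; ``controlling this invariant'' is exactly where the proof lives.

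Moreover, your specific local model is not adapted to any known positivity result for $pb_4$. With $g=\phi(y_1)\,\psi(x_2,y_2)$ and $\psi$ a compactly supported bump, the sets on which $f$ and $g$ would be pinned at $0$ and $1$ are products of arcs with small compact subsets of a disc in the $(x_2,y_2)$-plane; such sets are displaceable inside the region carrying the construction, and for such quadruples the available invariants vanish --- this is exactly why a naive higher-dimensional thickening of the two-dimensional picture fails. The paper's construction is engineered to avoid this: the extra factors are functions $\tilde h_1$ supported in annuli $D^2_{\rho_1,\rho_2}$ and equal to $1$ on circles $S^1_\rho$, so that after the cutoff step the perturbed functions are supported in $B^2(2l^2+2\epsilon)\times (D^2_{\rho_1,\rho_2})^{\times(n-1)}$, which embeds symplectically into $\mathbb{R}^2\times T^*\mathbb{T}^{n-1}$ sending $(S^1_\rho)^{n-1}$ to the zero section (an exact Lagrangian); only then does Lemma~\ref{L:pb4} (the pseudo-holomorphic-curve computation, after Proposition 1.21 of~\cite{BEP}) give the positive lower bound $1/(l^2+\epsilon)$, from which $\rho_{f,g}(s)\geqslant \tfrac12-\tfrac{s}{2p}$ and, after the rescaling $f_1=8Cf$, $g_1=8Cg$ with $q=1/(64C^2)$, the negative answer to Question~\ref{Q:Polterovich} follow. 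Your upper-bound deformation is fine and close in spirit to the paper's, and the cutoff/normalization issues in the construction are fixable; but your closing remark that in dimension $2$ ``the relevant invariants vanish by~\cite{Z-2}'' is also off the mark ($pb_4$ of a quadrilateral's sides is positive on surfaces as well --- \cite{Z-2} proves an approximation statement, not a vanishing), and the lower bound as proposed is a placeholder, not a proof.
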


The proof of Theorem~\ref{T:solution-question-P} relies on a certain computation of the $ pb_4 $ invariant, which is carried out similarly as an analogous computation in the proof of Proposition 1.21 in~\cite{BEP}. Some part of the proof of Theorem~\ref{T:solution-question-P} is reminiscent of the proof of Theorem 1.4 (ii) in~\cite{BEP}. 

\begin{remark}
Question~\ref{Q:Polterovich} has an analogue in the setting of Hermitian matrices equipped with a commutator (see~\cite{PS,H}). 
\end{remark}

%


\section{Proofs}

Let us first remind the definition of the $ pb_4 $ invariant which was initially introduced in~\cite{BEP} and which participates in the proof of Theorem~\ref{T:solution-question-P}. 

\begin{definition} \label{D:pb_4}
Let $ (M,\omega) $ be a symplectic manifold. Let $ X_0,X_1,Y_0,Y_1 \subset M $ be a quadruple of compact sets. Then we define $$ pb_4(X_0,X_1,Y_0,Y_1) = \inf \| \{ F,G \} \| ,$$
where the infimum is taken over the set of all pairs of smooth compactly supported functions $ F,G : M \rightarrow \mathbb{R} $ such that $ F \leqslant 0 $ on $ X_0 $, $ F \geqslant 1 $ on $ X_1 $, $ G \leqslant 0 $ on $ Y_0 $, and $ G \geqslant 1 $ on $ Y_1 $. Note that such set of pairs of functions is non-empty whenever $ X_0 \cap X_1 = Y_0 \cap Y_1 = \emptyset $. 
If the latter condition is violated, we put $$ pb_4(X_0,X_1,Y_0,Y_1) = +\infty .$$
\end{definition}

The following result was proved in~\cite{BEP} (in~\cite{BEP} this is Proposition 1.21):

\begin{proposition} \label{P:prop-from-BEP}
Let $ (M,\omega) $ be a symplectic surface of area $ B > 0 $. Consider a curvilinear quadrilateral $ \Pi \subset M $ of area $ A > 0 $ with sides denoted in the cyclic order by $ a_1,a_2,a_3,a_4 $ - that is $ \Pi $ is a topological disc bounded by the union of four smooth embedded curves $ a_1,a_2,a_3,a_4 $ connecting four distinct points in $ M $ in the cyclic order as listed here and (transversally) intersecting each other only at their common end-points. 
Let $ L $ be an exact section of $ T^* S^1 $. Assume that $ M \neq S^2 $ and that $ 2A \leqslant B $. Then in the symplectic manifold $ M \times T^* S^1 $ (with the split symplectic form) we have $$ pb_4(a_1 \times L, a_3 \times L, a_2 \times L, a_4 \times L) = 1/A > 0 .$$ 
\end{proposition}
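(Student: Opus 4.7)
My plan is to prove the equality $pb_4(a_1 \times L, a_3 \times L, a_2 \times L, a_4 \times L) = 1/A$ as two matching inequalities. The upper bound is essentially a coordinate computation, while the lower bound carries the geometric content and is where the hypotheses $M \neq S^2$ and $2A \leqslant B$ will enter through a pseudo-holomorphic curve argument.

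For the upper bound $pb_4 \leqslant 1/A$, I would first apply Moser's theorem on the closed disk $\Pi$ to obtain a diffeomorphism $\Phi \colon \Pi \to [0,1]^2$ that pulls $A\, du \wedge dv$ back to $\omega|_\Pi$ and sends the sides $a_1, a_2, a_3, a_4$ to the four sides of the unit square in cyclic order. The pulled-back coordinate functions $u \circ \Phi$ and $v \circ \Phi$ on $\Pi$ satisfy $\{u \circ \Phi, v \circ \Phi\} = 1/A$ and already meet the boundary conditions of the $pb_4$ definition. A standard cutoff-and-smoothing extension produces compactly supported functions $\widetilde F, \widetilde G$ on $M$ with $\|\{\widetilde F, \widetilde G\}\| \leqslant 1/A + \varepsilon$, and pulling these back to $M \times T^*S^1$ under the projection preserves both the bracket norm and the boundary conditions (since the test Lagrangians have the product form $a_i \times L$). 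Letting $\varepsilon \to 0$ yields $pb_4 \leqslant 1/A$.

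For the lower bound $pb_4 \geqslant 1/A$ I would adapt the pseudo-holomorphic curve strategy of Proposition 1.21 in \cite{BEP}. Given any admissible pair $F, G$ with $\|\{F, G\}\| = c$, pick a tame, split-type almost complex structure $J$ on $M \times T^*S^1$. The key geometric input is a $J$-holomorphic quadrilateral $u \colon D \to M \times T^*S^1$ of symplectic area exactly $A$ whose boundary runs cyclically along the four Lagrangians $a_1 \times L, a_2 \times L, a_3 \times L, a_4 \times L$; morally $u(z) = (w(z), \ell(z))$ with $w$ parametrizing $\Pi$ and $\ell$ landing in $L$. The composition $(F, G) \circ u \colon D \to \mathbb{R}^2$ satisfies boundary constraints that force its image to contain the unit square $[0,1]^2$ and therefore to have area at least $1$; on the other hand a Stokes-type energy inequality bounds this image area by $\|\{F, G\}\|$ times the symplectic area of $u$, i.e.\ by $cA$. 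Combining these gives $c \geqslant 1/A$. The main obstacle is controlling the existence and behavior of this pseudo-holomorphic quadrilateral: the almost complex structure $J$ must be chosen so that the relevant moduli space is non-empty and compact, and here the assumption $M \neq S^2$ rules out sphere bubbling in the $M$ factor while the assumption $2A \leqslant B$ ensures that the symplectic area of the curve is genuinely $A$ and not $B - A$. Once the curve is produced and its area correctly identified, the remaining energy comparison is routine.
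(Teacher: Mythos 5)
Your two-inequality outline matches the structure that the paper attributes to \cite{BEP} (the paper itself does not reprove this proposition; it only records that the upper bound comes from an explicit pair of functions and the lower bound from Gromov's theory of pseudo-holomorphic curves). The upper bound sketch is essentially fine, up to small repairs: the pullback of $\widetilde F,\widetilde G$ under the projection $M\times T^*S^1\to M$ is not compactly supported, so you must also multiply by a cutoff in the fiber variable equal to $1$ near the (compact) exact section $L$; since the cutoff depends only on the $T^*S^1$ factor this does not change $\|\{F,G\}\|$, and the boundary conditions on the $a_i\times L$ survive.

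The genuine gap is in the lower bound, and it sits exactly at the two points you wave through. First, the ``Stokes-type energy inequality'' bounding $\int_D \bigl((F,G)\circ u\bigr)^*(dx\wedge dy)$ by $\|\{F,G\}\|\cdot\mathrm{Area}(u)$ is simply false for an arbitrary tame $J$: on a $J$-complex line spanned by $v,Jv$ one has $(dF\wedge dG)(v,Jv)=dF(v)\,dG(Jv)-dF(Jv)\,dG(v)$, which is controlled by $|\nabla F||\nabla G|$ and not by $|\{F,G\}|$ (take $F=x_1$, $G=y_2$ in $\mathbb{R}^4$, so $\{F,G\}=0$, and the line spanned by $\partial_{x_1}+\partial_{x_2}$ and its image under the standard $J$). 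The pointwise comparison only holds for an almost complex structure adapted to $F$ and $G$ (one making $\mathrm{span}(X_F,X_G)$ and its $\omega$-orthogonal complement invariant), and for such a special, non-generic $J$ the existence of your ``key geometric input'' --- a $J$-holomorphic quadrilateral of area exactly $A$ with boundary on $a_1\times L,\dots,a_4\times L$, which are Lagrangians \emph{with boundary and corners}, since the $a_i$ are arcs --- is precisely the hard analytic content of the proof: it requires a Fredholm/compactness (continuation from the split $J$) argument, and this is where $M\neq S^2$ and $2A\leqslant B$ must actually be used, not merely invoked as you do. Your proposal explicitly defers this (``the main obstacle\dots''), so as written it is a plan, not a proof. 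A secondary imprecision: ``the image contains $[0,1]^2$, hence has area $\geqslant 1$'' does not directly bound the Stokes integral, because contributions of the map outside the square carry uncontrolled signs; the correct maneuver is a degree argument, e.g.\ integrating the pullback of a bump $2$-form supported in the open square and using that the boundary loop has winding number $\pm1$ about its points, combined with the \emph{two-sided} pointwise bound available for the adapted $J$. These repairs are standard, but the existence/compactness of the holomorphic quadrilateral for the adapted $J$ is not, and without it the lower bound $pb_4\geqslant 1/A$ is not established.
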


The proof of Proposition~\ref{P:prop-from-BEP}, which is presented in~\cite{BEP}, is divided into two parts. The first part proves the inequality $ pb_4(a_1 \times L, a_3 \times L, a_2 \times L, a_4 \times L)  \leqslant 1/A $ by providing a concrete example of a pair of functions $ F,G $ as in Definition~\ref{D:pb_4}. The second part proves the inequality $ pb_4(a_1 \times L, a_3 \times L, a_2 \times L, a_4 \times L)  \geqslant 1/A $, and it uses the Gromov's theory of pseudo-holomorphic curves.

Now we turn to the proofs of our results.
\begin{lemma} \label{L:curve-existence}
Consider $ \mathbb{R}^{2n} $ endowed with the standard symplectic structure $ \omega_{std} $. Then given any open set $ U \subseteq \mathbb{R}^{2n} $ and any $ A > 0 $, there exists a smooth symplectic embedding 
$ (B^2(A), \sigma_{std}) \rightarrow (U,\omega_{std}) $, where $ B^2(A) \subset \mathbb{R}^2 $ is the disc of area $ A $ centred at the origin, and $ \sigma_{std} $ is the standard area form on $ \mathbb{R}^2 $ (and on $ B^2(A) $). 
\end{lemma}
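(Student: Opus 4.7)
The plan is first to reduce to a standard model and then to produce an explicit embedding. Any non-empty open set $ U \subseteq \mathbb{R}^{2n} $ contains a small standard ball $ B^{2n}(\varepsilon) $ for some $ \varepsilon > 0 $, so it suffices to construct, for each $ \varepsilon > 0 $ and each $ A > 0 $, a symplectic embedding $ (B^2(A), \sigma_{std}) \hookrightarrow (B^{2n}(\varepsilon), \omega_{std}) $. Since the theorem this lemma serves assumes $ 2n > 2 $, one can work inside the first $ \mathbb{R}^4 $-summand of $ \mathbb{R}^{2n} $ (setting the remaining coordinates to zero), reducing the problem to the case of $ \mathbb{R}^4 $.

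For a large integer $ k $ and a small parameter $ f > 0 $, both to be chosen, the key step is to consider the map
\[
\Phi(s, t) = \bigl(f \cos t,\ f \sin t,\ g(s) \cos(kt),\ g(s) \sin(kt)\bigr), \qquad g(s) = \sqrt{2s/k},
\]
defined on the open rectangle $ (0, L) \times (0, 2\pi) $. A direct calculation gives $ \Phi^* \omega_{std} = k g(s) g'(s)\, ds \wedge dt = ds \wedge dt $, so $ \Phi $ is a symplectic immersion. Injectivity on the rectangle follows because the first two coordinates recover $ t \bmod 2\pi $, after which $ \sqrt{\Phi_3^2 + \Phi_4^2} = g(s) $ recovers $ s $ by strict monotonicity of $ g $. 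The image lies in $ B^4(\varepsilon) $ whenever $ f^2 + 2s/k \leq \varepsilon^2 $, so choosing $ f $ very small and $ L $ slightly smaller than $ k\varepsilon^2/2 $ places the whole image in the ball; the symplectic area of the rectangle is then $ 2\pi L \approx \pi k \varepsilon^2 $, which exceeds $ A $ once $ k $ is large enough.

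To finish, I will invoke Moser's theorem for open $ 2 $-surfaces: any simply-connected open subset of $ (\mathbb{R}^2, \sigma_{std}) $ of symplectic area greater than $ A $ admits a symplectic embedding of $ (B^2(A), \sigma_{std}) $. Applying this to the rectangle $ (0, L) \times (0, 2\pi) $ and composing with $ \Phi $ (and the inclusion $ \mathbb{R}^4 \hookrightarrow \mathbb{R}^{2n} $) yields the desired embedding. The essential point of the argument is the form of $ \Phi $: the first $ \mathbb{C} $-factor alone contributes nothing to the pullback (being constant in $ s $), and for fixed $ s $ the second $ \mathbb{C} $-factor lies on a circle and therefore also contributes nothing; only the interplay between the radial variation of $ g(s) $ and the $ k $-fold angular winding $ (\cos kt, \sin kt) $ produces the term $ k g g' = 1 $, and this winding amplification is precisely what allows a symplectic $ 2 $-disc of arbitrary area to fit inside a fixed small ball in ambient dimension at least four.
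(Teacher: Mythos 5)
Your proposal is correct and follows essentially the same route as the paper: the paper's proof uses the holomorphic embedding $ u(z) = (z^k, z, 0, \dots, 0) $, whose image is a symplectic curve of arbitrarily large area inside a fixed small ball, and your map $ \Phi $ is in effect the same winding mechanism (it is essentially such a curve restricted to an annulus, written in coordinates in which the pullback form is exactly $ ds \wedge dt $). The concluding two-dimensional Moser/area-preserving embedding step that you invoke explicitly is the same ingredient the paper leaves implicit, so there is no substantive difference.
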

\begin{proof} It is enough to construct a smooth embedding of $ B^{2}(\pi) $ into $ B^{2n}(2\pi) $ (where $ B^{2n}(2\pi) $ is the $ 2n $-dimensional ball of capacity $ 2\pi $, or equivalently, radius $ \sqrt{2} $, centred at the origin), such that its image is a symplectic curve having arbitrarily large symplectic area. An example of such an embedding is the map $ u : B^{2}(\pi) \rightarrow \mathbb{C}^n $ given by $ u(z) = (z^k,z, 0 ,0 , ..., 0) $, where $ k \in \mathbb{N} $ is large enough.
\end{proof}

\begin{lemma} \label{L:pb4}
Assume that we have a curvilinear quadrilateral of area $ A > 0 $ on the plane $ \mathbb{R}^2 $, with sides $ a_1, a_2, a_3, a_4 $ (written in the cyclic order), and let $ L \subset T^* \mathbb{T}^{n-1} $ be an exact section. Then in the symplectic manifold $ \mathbb{R}^2 \times T^* \mathbb{T}^{n-1} $ (with the split symplectic form) we have $$ pb_4(a_1 \times L, a_3 \times L, a_2 \times L, a_4 \times L) = 1/A > 0 .$$
\end{lemma}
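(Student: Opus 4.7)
The plan is to follow the structure of the proof of Proposition~\ref{P:prop-from-BEP}, dealing separately with the upper and lower bounds. Since $L$ is an exact Lagrangian section of $T^*\mathbb{T}^{n-1}$, a compactly supported Hamiltonian isotopy (acting only on the second factor) brings $L$ to the zero section $\mathbb{T}^{n-1} \subset T^*\mathbb{T}^{n-1}$, and by the symplectic invariance of $pb_4$ I may and will assume throughout that $L$ is the zero section.

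For the upper bound $pb_4 \leq 1/A$, I would split $T^*\mathbb{T}^{n-1} = T^*S^1 \times T^*\mathbb{T}^{n-2}$ with $L = L_{S^1} \times L'$ the corresponding splitting into zero sections, and apply the explicit construction from the first half of the proof of Proposition~\ref{P:prop-from-BEP} in the smaller manifold $\mathbb{R}^2 \times T^*S^1$ (the hypothesis $2A \leq B$ is vacuously satisfied for $M = \mathbb{R}^2$). This produces, for any $\epsilon > 0$, compactly supported functions $F_0, G_0 \in C^\infty_c(\mathbb{R}^2 \times T^*S^1)$ satisfying the required $pb_4$ boundary conditions on the $a_i \times L_{S^1}$, with $\|\{F_0, G_0\}\| < 1/A + \epsilon$. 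I would then pick a smooth cutoff $\chi \in C^\infty_c(T^*\mathbb{T}^{n-2})$ depending only on the fiber coordinates $p_2,\ldots,p_{n-1}$, equal to $1$ near the origin and satisfying $0 \leq \chi \leq 1$, and set $\tilde F = F_0\chi$, $\tilde G = G_0\chi$. Since $F_0, G_0$ and $\chi$ depend on disjoint pairs of conjugate variables, $\chi$ Poisson-commutes with $F_0$ and $G_0$, so $\{\tilde F, \tilde G\} = \chi^2\{F_0, G_0\}$; moreover, $\chi \equiv 1$ on $L$, so the boundary conditions are preserved. Letting $\epsilon \to 0$ yields the upper bound.

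For the lower bound $pb_4 \geq 1/A$, I would adapt the pseudo-holomorphic curve argument from the second half of the proof of Proposition~\ref{P:prop-from-BEP}. Take a compatible product almost complex structure $J = J_0 \oplus J_1$, with $J_0$ the standard structure on $\mathbb{R}^2 = \mathbb{C}$ and $J_1$ the standard structure on $T^*\mathbb{T}^{n-1} \cong \mathbb{T}^{n-1} \times \mathbb{R}^{n-1}$ making the zero section totally real. For each $q \in L$, the map $v_q : D \to \mathbb{R}^2 \times T^*\mathbb{T}^{n-1}$, $v_q(z) = (u_0(z), q)$, where $u_0 : D \to \mathbb{R}^2$ is a holomorphic parametrization of $\Pi$, is $J$-holomorphic with boundary cyclically on $a_1 \times L, a_2 \times L, a_3 \times L, a_4 \times L$ and of symplectic area $A$. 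The Gromov-type energy-capacity inequality then reads
\begin{equation*}
  \|\{F,G\}\| \cdot A \;\geq\; \left|\int_D v_q^*(dF\wedge dG)\right| \;=\; \left|\int_{\partial D} v_q^*(F\,dG)\right| \;\geq\; 1,
\end{equation*}
where the last bound follows by the standard direct computation along the four boundary arcs using the $pb_4$ boundary conditions of $F$ and $G$.

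The hardest step is justifying the first inequality in higher dimensions. In the $4$-dimensional setting of Proposition~\ref{P:prop-from-BEP} it holds pointwise because the tangents $(e, Je)$ to $v$ span a full symplectic $2$-plane, making $v_q^*(dF\wedge dG)$ coincide with $\{F,G\}\,v_q^*\omega$. In our higher-dimensional setting, however, the $T^*\mathbb{T}^{n-2}$ directions orthogonal to $v_q(D)$ contribute extra terms to $\{F,G\}$ that are invisible to $v_q^*(dF\wedge dG)$. To circumvent this, I would integrate the Stokes identity over the family $\{v_q\}_{q\in L}$ parametrized by the compact Lagrangian $L = \mathbb{T}^{n-1}$: after Fubini and integration by parts in the torus $q$-directions, the extra contributions should either cancel by periodicity or be absorbable into the main $\|\{F,G\}\|\cdot A$ estimate via the boundary constraints, delivering $\|\{F,G\}\|\cdot A \geq 1$ and hence the claim.
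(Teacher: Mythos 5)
Your upper bound argument is fine: cutting off by a function $\chi$ of the momenta on the extra $T^*\mathbb{T}^{n-2}$ factor indeed Poisson-commutes with the pair produced by the two-factor construction, preserves the boundary conditions on $a_i\times L$, and keeps compact support; this matches the explicit constructions the paper uses elsewhere (Claim~\ref{Cl:1}). The reduction of an exact section $L$ to the zero section by a (cut-off) fiberwise Hamiltonian translation is also legitimate. Note that the paper itself gives no details for this lemma: it only asserts that the argument of Proposition~\ref{P:prop-from-BEP} (Proposition 1.21 of \cite{BEP}) carries over, and that argument's lower bound rests on Gromov's theory of pseudo-holomorphic curves, not on an elementary Stokes estimate.

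The lower bound in your proposal has a genuine gap, and it is exactly the step you flag. The inequality $\|\{F,G\}\|\cdot A \geqslant \bigl|\int_D v_q^*(dF\wedge dG)\bigr|$ is not justified, because for the split disc $v_q(z)=(u_0(z),q)$ the form $v_q^*(dF\wedge dG)$ computes only the \emph{partial} Poisson bracket of $F$ and $G$ in the $\mathbb{R}^2$ variables along $\mathbb{R}^2\times\{q\}$, and this partial bracket is in no way controlled by the sup norm of the full bracket: $\{F,G\}$ contains the additional terms $\sum_i\bigl(\partial_{q_i}F\,\partial_{p_i}G-\partial_{p_i}F\,\partial_{q_i}G\bigr)$, which can cancel a large planar contribution, so $\|\{F,G\}\|$ can be small while $\int_D v_q^*(dF\wedge dG)$ stays of order $1$. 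Your claim that in the four-dimensional case the estimate holds pointwise because the tangent plane of a $J$-holomorphic disc is a symplectic $2$-plane is also false: for a generic complex line $\mathrm{span}(e,Je)$ one does not have $dF\wedge dG(e,Je)=\{F,G\}\,\omega(e,Je)$ (this identity requires the plane to be spanned by the Hamiltonian directions of $F$ and $G$, or the ambient dimension to be $2$), which is precisely why the lower bound in \cite{BEP} is proved with genuine Gromov theory — an existence/compactness argument for pseudo-holomorphic quadrilaterals with respect to an almost complex structure (or deformed symplectic data) adapted to the given pair $F,G$, with the area $A$ entering only as an a priori area bound for those curves — rather than by integrating along a fixed split disc for the standard product $J$. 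Finally, the proposed remedy of averaging the Stokes identity over $q\in L$ does not close the gap: the cross terms above are evaluated only on the zero section $p=0$, they are not total $q$-derivatives, so integration over the torus neither cancels them nor makes them absorbable into $\|\{F,G\}\|\cdot A$. As written, the lower half of the lemma remains unproved; to complete it you would need to actually adapt the pseudo-holomorphic curve argument of Proposition 1.21 of \cite{BEP} to $\mathbb{R}^2\times T^*\mathbb{T}^{n-1}$, as the paper asserts can be done.
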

\begin{proof}
The proof goes similarly as the proof of Proposition~\ref{P:prop-from-BEP} (which is Proposition 1.21 in~\cite{BEP}).
\end{proof}

\begin{proof}[Proof of Theorem~\ref{T:solution-question-P}] 
Consider a Darboux neighborhood $ U \subset M $. Denote $ l = 1 / \sqrt{q} $, $ \epsilon = 1/p - 1/q $. Then $ q = 1 / l^2 $, $ p = 1 / (l^2 + \epsilon) $. By Lemma~\ref{L:curve-existence}, there exists a symplectic embedding $ u : (B^2(2l^2+3\epsilon),\sigma_{std}) \rightarrow (U,\omega) $. By the symplectic neighborhood theorem, a neighborhood of $ u (B^2(2l^2+3\epsilon)) $ contains an open subset which is symplectomorphic to the product $ B^2(2l^2+2\epsilon) \times B^{2}(\epsilon')^{\times (n-1)} $ of a disc of area $ 2l^2 + 2\epsilon $ and $ n-1 $ copies of a disc of area $ \epsilon' $, for some $ \epsilon' > 0 $. Let $ \phi : (W = B^2(2l^2+2\epsilon) \times B^{2}(\epsilon')^{\times (n-1)},\omega_{std}) \rightarrow (M,\omega) $ be a symplectic embedding whose image is this open subset. Choose a curvilinear quadrilateral inside $ B^2(2l^2+2\epsilon) $ with sides $ a_1,a_2,a_3,a_4 $ (which are given in cyclic order), and area $ l^2+\epsilon $. 
 
 \begin{claim} \label{Cl:1}
 There exist smooth compactly supported functions $ \tilde{f}_1,\tilde{g}_1 : B^2(2l^2+2\epsilon) \rightarrow \mathbb{R} $, such that $ \| \{ \tilde{f}_1,\tilde{g}_1 \} \| = 1/l^2 = q $, such that $ \tilde{f}_1 = 0 $ on $ a_1 $, $ \tilde{f}_1 = 1 $ on $ a_3 $, $ \tilde{g}_1 = 0 $ on $ a_2 $, $ \tilde{g}_1 = 1 $ on $ a_4 $, such that $ \tilde{f}_1,\tilde{g}_1 \geqslant 0 $ on $ B^2(2l^2+2\epsilon) $ and such that $ \| \tilde{f}_1 \| = \| \tilde{g}_1 \| = 1 $. 
 \end{claim}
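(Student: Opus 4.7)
My plan is to construct $\tilde{f}_1, \tilde{g}_1$ explicitly on an area-preserving planar model of the disc, in the spirit of the upper-bound construction of Proposition~\ref{P:prop-from-BEP}. First, using Moser's theorem in dimension two together with a rectification of the quadrilateral by an area-preserving diffeomorphism, I identify $B^2(2l^2+2\epsilon)$ with a convenient open planar domain of the same area in which $\Pi$ becomes a standard straight rectangle $R = [x_0, x_0+w] \times [y_0, y_0+h]$ of area $wh = l^2+\epsilon$, with the four sides $a_i$ becoming its straight edges. The aspect ratio $w{:}h$ and the position of $R$ inside the model are left as free parameters, to be chosen later so that the supports constructed below fit inside the disc.

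Second, I use the product-with-cutoff ansatz
\[
\tilde{f}_1(x, y) = F(x)\, \chi_1(y), \qquad \tilde{g}_1(x,y) = G(y)\, \chi_2(x),
\]
where $F, G : \mathbb{R} \to [0, 1]$ are smooth compactly supported nonnegative bump-like functions rising monotonically from $0$ to $1$ across the appropriate side of $R$ and then decaying back to $0$, with $F(x_0) = 0$, $F(x_0+w) = 1$, and $\max |F'| = \max|G'| = 1/l$ (achievable because one can arrange $w, h \geq l$), and $\chi_1, \chi_2$ are smooth cutoffs equal to $1$ on $\supp G$ and $\supp F$ respectively and decaying to $0$ just outside. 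The crucial design choice is that each cutoff's transition region is placed entirely inside the zero set of the corresponding base function (i.e.\ $\chi_1'$ is supported where $G = 0$ and $\chi_2'$ where $F = 0$), so that the cross-term $F G \chi_1' \chi_2'$ in the bracket expansion is identically zero.

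Under these choices the Poisson bracket reduces to $\{\tilde{f}_1, \tilde{g}_1\} = F'(x) G'(y) \chi_1(y) \chi_2(x)$, whose absolute value is bounded by $\max|F'|\cdot \max|G'| = 1/l^2$ and attains this value at some interior point of $R$, where $\chi_1 = \chi_2 = 1$. The prescribed boundary values on the $a_i$, the nonnegativity, the sup-norm equalities $\|\tilde{f}_1\| = \|\tilde{g}_1\| = 1$, and the compactness of support in $B^2(2l^2+2\epsilon)$ are then immediate from the construction.

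The principal technical obstacle is the area budget: the supports of $\tilde{f}_1, \tilde{g}_1$ together must fit inside a region of area $2l^2+2\epsilon$ in the Moser model, while the naive product supports have bounding rectangle area roughly $(w+l)(h+l)$. Reconciling this forces a careful choice of the aspect ratio of $R$ and a tight placement of the decay and cutoff-transition zones, exactly in the style of the explicit construction used in the upper-bound part of Proposition~1.21 of~\cite{BEP}; once this geometric placement is fixed, the verification of all the listed properties is a direct computation.
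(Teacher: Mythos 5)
Your overall strategy---rectifying the quadrilateral by an area-preserving identification and taking a product ansatz whose cross term is killed by placing the cutoff transitions inside the zero set of the other factor---is the same as the paper's. The genuine gap is the area budget, which you flag but defer as a matter of ``careful placement''; in fact it rules out your specific design outright. You impose $\max|F'|=\max|G'|=1/l$ together with $\chi_2\equiv 1$ on $\supp F$ and $\chi_1\equiv 1$ on $\supp G$. Then $F$ must not only climb from $0$ to $1$ over $[x_0,x_0+w]$ but also decay back to $0$ with slope at most $1/l$, so $F>0$ on $(x_0,x_0+w+l)$; likewise $G>0$ on $(y_0,y_0+h+l)$, and since $\chi_1=1$ wherever $G\neq 0$, the formula $F\chi_1$ is nonzero on the whole open rectangle $Q=(x_0,x_0+w+l)\times(y_0,y_0+h+l)$. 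Compact support of $\tilde f_1$ in the model domain $\Omega$ then forces $Q\subseteq\Omega$: otherwise $Q$, being connected and meeting $\Omega$ (it contains the interior of the rectified quadrilateral), would meet $\partial\Omega$ at a point where $F\chi_1\neq 0$. Hence $\mathrm{area}(\Omega)\geqslant (w+l)(h+l)\geqslant(\sqrt{l^2+\epsilon}+l)^2>4l^2$, whereas the disc provides only $2l^2+2\epsilon$. For $p$ close to $q$ (i.e.\ $\epsilon$ small compared with $l^2=1/q$), this is impossible, and no choice of the aspect ratio $w\,{:}\,h$ with $wh=l^2+\epsilon$ can help.

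What is missing is the asymmetry of the paper's construction. There the quadrilateral is rectified to $[0,l+\epsilon_1]\times[0,l]$ with $\epsilon_1=\epsilon/l$, and only one profile, $u_1(x)$, rises \emph{and} decays slowly, over total length about $2l$, with $\|u_1'\|=1/(l+\epsilon_3)$ strictly below $1/l$ (possible because the horizontal side has length $l+\epsilon_1>l$). The other profile $v_2(y)$ must climb from $0$ to $1$ across the full height $l$, so its slope necessarily exceeds $1/l$; the paper takes $\max|v_2'|=(l+\epsilon_3)/l^2$ on $[0,l+\epsilon_2]$, so that the product of the two slopes is exactly $1/l^2$, and then lets $v_2$ drop to $0$ arbitrarily fast just above $y=l+\epsilon_2$, where the cutoff $v_1$ of the first function has already vanished, so the surviving bracket term $u_1'v_2'v_1$ is unaffected. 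In particular, one of the cutoffs is required to equal $1$ only along the relevant side of the quadrilateral, not on the entire support of the other profile---precisely the requirement you would have to abandon. This brings the total support down to roughly $(2l+\epsilon_1)\times l$, which fits in area $2l^2+2\epsilon$, and it also explains why the exact value $1/l^2$ is obtained by balancing one slope slightly below $1/l$ against one slightly above, rather than by your symmetric normalization.
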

\begin{proof}
 Denote $ \epsilon_1 = \epsilon / l > 0 $, and consider $ \epsilon_2 > 0 $ such that $ (2l + \epsilon_1 + 3\epsilon_2) (l + 3 \epsilon_2) = 2l^2 + 2\epsilon $. Denote $ \epsilon_3 = \min(\epsilon_1,\epsilon_2) / 2 $. It is enough to find smooth compactly supported functions $ \hat{f}_1, \hat{g}_1 : (-\epsilon_2,2l+\epsilon_1+2\epsilon_2) \times (-\epsilon_2,l+2\epsilon_2) \rightarrow \mathbb{R} $, such that  $ \| \{ \hat{f}_1,\hat{g}_1 \} \| = 1/l^2 $, such that $ \hat{f}_1 = 0 $ on $ \{0\} \times [0,l] $, $ \hat{f}_1 = 1 $ on $ \{ l+\epsilon_1 \} \times [0,l] $, $ \hat{g}_1 = 0 $ on $ [0,l+\epsilon_1] \times \{ 0 \} $, $ \hat{g}_1 = 1 $ on $ [0,l+\epsilon_1] \times \{ l \} $, such that $ \hat{f}_1,\hat{g}_1 \geqslant 0 $ on $ (-\epsilon_2,2l+\epsilon_1+2\epsilon_2) \times (-\epsilon_2,l+2\epsilon_2) $ and such that $ \| \hat{f}_1 \| = \| \hat{g}_1 \| = 1 $. We give an explicit construction. First, choose smooth functions $ u_1,v_1,u_2,v_2 : \mathbb{R} \rightarrow [0,1] $ with the following properties: 

\begin{enumerate} 
\item[(a)] $ \supp(u_1) \subset (0,2l+\epsilon_1+\epsilon_2) $, $ u_1(l+\epsilon_1) = 1 $, $ \| u_1' \| = 1/( l + \epsilon_3) $. 
\item[(b)] $ \supp(v_1) \subset (-\epsilon_2,l+\epsilon_2) $, $ v_1(y) = 1 $ on $ [0,l] $. 
\item[(c)] $ \supp(u_2) \subset (-\epsilon_2,2l+\epsilon_1+2\epsilon_2) $, $ u_2(x) = 1 $ on $ [0,2l+\epsilon_1+\epsilon_2] $. 
\item[(d)] $ \supp (v_2) \subset (0,l +  2\epsilon_2) $, $ v_2(l) = 1 $, \\ $ \max_{[0,l]} |v_2'(y)| = \max_{[0,l+\epsilon_2]} |v_2'(y)| = (l + \epsilon_3)/l^2 $. 
\end{enumerate}

Now define $ \hat{f}_1,\hat{g}_1 : (-\epsilon_2,2l+\epsilon_1+2\epsilon_2) \times (-\epsilon_2,l+2\epsilon_2) \rightarrow \mathbb{R} $ by $ \hat{f}_1(x,y) = u_1(x)v_1(y) $, $ \hat{g}_1(x,y) = u_2(x)v_2(y) $. Then $$ \{ \hat{f}_1 , \hat{g}_1 \} (x,y) = v_1(y)u_2(x)u_1'(x)v_2'(y) - u_1(x)v_2(y)u_2'(x)v_1'(y) $$ $$ = v_1(y)u_2(x)u_1'(x)v_2'(y) = u_1'(x)v_2'(y)v_1(y) .$$ We have $ \| u_1' \| = 1 / (l+\epsilon_3) $, and $$ \| v_2' v_1 \| = \max |v_2'(y)v_1(y)| = \max_{[0,l+\epsilon_2]} |v_2'(y)v_1(y)| = (l + \epsilon_3) / l^2 ,$$ and hence $ \|  \{ \hat{f}_1 , \hat{g}_1 \} \| = \| u_1' \| \cdot \| v_2' v_1 \| = 1/l^2 $. The rest of the claimed properties of $ \hat{f}_1,\hat{g}_1 $ follow immediately.
\end{proof}
 
Consider functions $ \tilde{f}_1,\tilde{g}_1 $, as in Claim~\ref{Cl:1}. Choose $ 0 < \rho_1 < \rho <\rho_2 $ such that $ \pi \rho_2^2 < \epsilon' $, and choose a smooth function $ \tilde{h}_1 : B^2(\epsilon') \rightarrow \mathbb{R} $ such that its support lies inside the annulus $ D^2_{\rho_1,\rho_2} = \{ z \in B^{2}(\epsilon') \, | \, \rho_1 < |z| < \rho_2 \} $, such that $ 0 \leqslant \tilde{h}_1 \leqslant 1 $ on $ B^2(\epsilon') $, and such that $ \tilde{h}_1 = 1 $ on $ S^1_\rho = \{ z \in B^2(\epsilon') \, | \, |z| = \rho \} $. Now define $ f_1, g_1 : W \rightarrow \mathbb{R} $ by $$ f_1(z,w_1,...,w_{n-1}) = 
\tilde{f}_1(z) \tilde{h}_1(w_1) ... \tilde{h}_1(w_{n-1}) ,$$ $$ g_1(z,w_1,...,w_{n-1}) = 
\tilde{g}_1(z) \tilde{h}_1(w_1) ... \tilde{h}_1(w_{n-1}) ,$$ for $ (z,w_1,...,w_{n-1}) \in W $, and then define $ f,g : M \rightarrow \mathbb{R} $ by setting $ f = g = 0 $ on $ M \setminus \phi(W) $ and setting $ f(\phi(x)) = f_1(x) $, $ g(\phi(x)) = g_1(x) ,$ for any $ x \in W $.  First note that $ f_1 = 0 $ on $ a_1 \times (S^1_\rho)^{n-1} $, $ f_1 = 1 $ on $ a_3 \times (S^1_\rho)^{n-1} $, $ g_1 = 0 $ on $ a_2 \times (S^1_\rho)^{n-1} $, and $ g_1 = 1 $ on $ a_4 \times (S^1_\rho)^{n-1} $. Secondly, we have $$ \{ f,g \}(\phi(z,w_1,...,w_{n-1})) = \{ \tilde{f}_1, \tilde{g}_1 \}(z) \tilde{h}_1^2(w_1) ... \tilde{h}_1^2(w_{n-1}) $$ for any $ (z,w_1,...,w_{n-1}) \in W $, and hence $ \| \{ f,g \} \| = q $. Also we have that $ f,g \geqslant 0 $ on $ M $, and that $ \| f \| = \| g \| = 1 $. Let us show that the constructed functions $ f,g $ satisfy ($\ref{E:1}$). For showing the upper bound, choose a smooth compactly supported function $ h : M \rightarrow [0,1] $ such that $ h = 1 $ on the union of supports $ \supp(f) \cup \supp(g) $, and define functions $ F,G : M \rightarrow \mathbb{R} $ by $ F = ( \frac{1}{2}-\frac{s}{2q} + \frac{s}{q} f) h $, $ G = g $ (the upper bound in ($\ref{E:1}$) is essentially the statement of Theorem 1.4 (ii), inequality (8) in~\cite{BEP}). Let us show the lower bound, for a given value of $ s $. Since the profile function $ \rho_{f,g} $ is non-negative and non-increasing, it is sufficient to prove the lower bound for $ s \in (0,p) $. Take $ s \in (0,p) $ and denote $ t = \frac{1}{2} - \frac{1}{2p} s > 0 $. Now assume that we have a pair of smooth compactly supported functions $ F,G \in C^\infty_c(M) $, such that $ \| F - f \| + \| G - g \| \leqslant t $. Denote $ \alpha = \| F-f \| $,
$ \beta = \| G-g \| $. Then $ \alpha, \beta \geqslant 0 $ and $ \alpha + \beta \leqslant t $. Now choose $ \delta > 0 $ small, and pick smooth functions $ u,v : \mathbb{R} \rightarrow \mathbb{R} $, such that the function $ u $ satisfies $ u(x) = 0 $ for $ x \in [-\alpha,\alpha] $, $ | u(x) - x | \leqslant (1+\delta) \alpha $ for $ x \in \mathbb{R} $, and $ |u'(x)| \leqslant 1 $ for $ x \in \mathbb{R} $, and such that the function $ v $ satisfies $ v(x) = 0 $ for $ x \in [-\beta,\beta] $, $ | v(x) - x | \leqslant (1+\delta) \beta $ for $ x \in \mathbb{R} $, and $ |v'(x)| \leqslant 1 $ for $ x \in \mathbb{R} $. Then the functions $ F ' = u \circ F $, $ G ' = v \circ G $ satisfy $ \| \{ F',G' \} \| \leqslant  \| \{ F, G \} \| $. But moreover, we have that $ F'(x) = 0 $ whenever $ f(x) = 0 $, $ F'(x) \geqslant 1 - (2 + \delta) \alpha $ whenever $ f(x) = 1 $, $ G'(x) = 0 $ whenever $ g(x) = 0 $, and $ G'(x) \geqslant 1 - (2+\delta) \beta $ whenever $ g(x) = 1 $, for any $ x \in M $. Hence if we denote $ W' =  B^2(2l^2+2\epsilon) \times (D^{2}_{\rho_1,\rho_2})^{\times (n-1)} \subset W $, then the supports of $ F',G' $ lie inside $ \phi(W') $, so in particular the supports of $ \phi^*F', \phi^* G' $ lie inside $ W' $, and moreover we have that $ \phi^* F' = 0 $ on $ a_1 \times (S^1_\rho)^{n-1} $, $ \phi^* F' \geqslant 1 - (2+\delta) \alpha $ on $ a_3 \times (S^1_\rho)^{n-1} $, $ \phi^* G' = 0 $ on $ a_2 \times (S^1_\rho)^{n-1} $, and $ \phi^* G' \geqslant 1 - (2+\delta) \beta $ on $ a_4 \times (S^1_\rho)^{n-1} $. Consider a symplectic embedding $ \psi $ of $ W' = B^2(2l^2+2\epsilon) \times (D^{2}_{\rho_1,\rho_2})^{\times (n-1)} $ into $ \mathbb{R}^2 \times (T^* S^1)^{\times (n-1)} = \mathbb{R}^2 \times T^*  \mathbb{T}^{n-1}  $ (where $ \mathbb{T}^{n-1} = (S^1)^{n-1} $ is the $ (n-1) $-dimensional torus), given as a product of maps, such that at the factor $ B^2(2l^2 + 2\epsilon) $ we have the standard embedding into $ \mathbb{R}^2 $ (given by the identity map), and such that at each factor $ D^{2}_{\rho_1,\rho_2} $, the circle $ S^1_\rho $ is mapped onto the zero section of $ T^* S^1 $.  Denote the push-forwards $ F_2 = \psi_* \phi^* F' = F' \circ \phi \circ \psi^{-1} $, $ G_2 = \psi_* \phi^* G' = G' \circ \phi \circ \psi^{-1} $, which are a priori defined on $ \psi(W') $, and extend them by $ 0 $ to obtain functions on the whole $ \mathbb{R}^2 \times T^*  \mathbb{T}^{n-1} $. We have $ F_2 = 0 $ on $ a_1 \times L_0 $, $ F_2 \geqslant 1 - (2+\delta) \alpha $ on $ a_3 \times L_0 $, $ G_2 = 0 $ on $ a_2 \times L_0 $, and $ G_2 \geqslant 1 - (2+\delta) \beta $ on $ a_4 \times L_0 $, where $ L_0 \subset T^* \mathbb{T}^{n-1} $ is the zero section. Therefore in view of Lemma~\ref{L:pb4}, we get $$  \| \{ F, G \} \| \geqslant \| \{ F',G' \} \| = \| \{ F_2 ,G_2 \} \| $$ $$ \geqslant (1 - (2+\delta) \alpha)(1 - (2+\delta) \beta) \cdot pb_4(a_1 \times L_0, a_3 \times L_0, a_2 \times L_0, a_4 \times L_0) $$ $$ = \frac{(1 - (2+\delta) \alpha)(1 - (2+\delta) \beta)}{l^2 + \epsilon} \geqslant \frac{1 - (2+\delta)(\alpha + \beta)}{l^2 + \epsilon} \geqslant \frac{1 - (2+\delta) t}{l^2 + \epsilon} = (1 - (2+\delta)t) p. $$ Since we can choose $ \delta > 0 $ to be arbitrarily small, we in fact get $$ \| \{ F,G \} \| \geqslant (1-2t)p = s .$$ Thus we have shown that for any $ F,G \in C^\infty_c(M) $ with $ \| F - f \| + \| G - g \| \leqslant t  $ we have $ \| \{ F,G \} \| \geqslant s $. This immediately implies $$ \rho_{f,g}(s) \geqslant t = \frac{1}{2} - \frac{1}{2p} s .$$
\end{proof}


\end{document}